\documentclass[12pt,a4paper,oneside]{amsart}
\usepackage{tikz-cd}
\usepackage{cite}
\usepackage[english]{babel}
\usepackage{geometry}
\usepackage{amsmath}
\usepackage{amssymb}

\newtheorem{thm}{Theorem}[section]
\newtheorem{lem}[thm]{Lemma}
\newtheorem{cor}[thm]{Corollary}
\newtheorem{exa}[thm]{Example}
\newtheorem{prop}[thm]{Proposition}
\newtheorem{Def}[thm]{Definition}
\newtheorem{rem}[thm]{Remark}



\long\def\comment#1{\relax}

\let\takeaway=\setminus
\newcommand{\height}{\mathop{\mathrm{ht}}}  

\newcommand{\pset}{\mathop{\mathcal{P}}}  
\newcommand{\Int}{\mathop{\mathrm{Int}}} 
\newcommand{\R}{{\mathcal{R}}} 
\newcommand{\fring}[2]{\R(#1,#2)} 

\def\natn{\mathbb N} 
\def\intz{\mathbb Z} 

\def\I{\mathcal I} 
\def\J{\mathcal J} 
\def\Q{\mathcal Q} 
\def\F{\mathcal F} 
\def\G{\mathcal G} 
\def\U{\mathcal U} 
\def\Z{\mathcal Z} 
\def\MU{M_{\mathcal U}}
\def\PU{P_{\mathcal U}}
\def\IF{I_{\mathcal F}}
\def\JF{J_{\mathcal F}}
\def\IG{I_{\mathcal G}}

\def\MF{M_{\mathcal F}}

\long\def\comment#1{\relax}
\long\def\commented_out#1{\relax}

\def\ufprimes_prime_name{Lemma}

\author[S.~Frisch]{Sophie Frisch}
\address{\hspace{-12pt}Department of Analysis and Number Theory (5010) \\
Technische Universit\"at Graz \\
Kopernikusgasse 24 \\
8010 Graz, Austria}
\email{frisch@math.tugraz.at}
\title[Spectrum of rings of functions]%
{On the spectrum of rings of functions}
\thanks{This research was supported by the Austrian Science Fund FWF
grant P27816-N26}
\subjclass[2010]{Primary 13F20; 
Secondary 13L05, 13B25, 13A15, 13G05, 12L10}
\keywords{ultrafilter, integer-valued polynomials, rings of functions,
polynomial functions, maximal ideals, prime spectrum, commutative rings, 
integral domains, products, ultraproducts}

\begin{document}
\hbox to \textwidth{\ \hfill \ }
\vskip-2cm
To appear in J.~Pure Appl. Algebra 
\vskip1cm

\addtolength{\itemsep}{2pt}
\addtolength{\parsep}{2pt}
\addtolength{\itemsep}{2pt}
\begin{abstract}
For $D$ a domain and $E\subseteq D$, we investigate the prime spectrum
of rings of functions from $E$ to $D$, that is, of rings contained in 
$\prod_{e\in E} D$ and containing $D$. Among other things, we characterize,
when $M$ is a maximal ideal of finite index in $D$, those prime ideals lying
above $M$ which contain the kernel of the canonical map to 
$\prod_{e\in E} (D/M)$ as being precisely the prime ideals corresponding 
to ultrafilters on $E$. 
We give a sufficient condition for when all primes above $M$ are of this
form and thus establish a correspondence to the prime spectra
of ultraproducts of residue class rings of $D$. As a corollary, we
obtain a description using ultrafilters, differing from Chabert's 
original one which uses elements of the $M$-adic completion, of the prime
ideals in the ring of integer-valued polynomials $\Int(D)$ lying above 
a maximal ideal of finite index. 
%
\end{abstract}

\hfuzz=1pt
\maketitle

\section{Introduction}
Let $D$ be an integral domain, $E\subseteq D$, and $\R$ a subring of 
$\prod_{e\in E} D$, containing $D$. The elements of $\R$ 
can be interpreted as functions from $E$ to $D$ and, consequently,
we call $\R$ a ring of functions from $E$ to $D$.
We will investigate the prime spectra of such rings of functions. 
We obtain, for quite general $\R$, a partial description of 
the prime spectrum, cf.\ Theorems~\ref{maxideal} and \ref{finitethm}, 
and in special cases a complete 
characterization, cf.\ Corollary~\ref{finalcor}. 

Our motivation is the spectrum of a ring of integer-valued polynomials:
For $D$ an integral domain with quotient field $K$, let
$\Int(D) = \{ f\in K[x]\mid f(D)\subseteq D\}$
be the ring of integer-valued polynomials on $D$. 
More generally, when $K$ is understood, we let
$\Int(A,B) = \{ f\in K[x]\mid f(A)\subseteq B\}$
for $A,B\subseteq K$.

If $D$ is a Noetherian one-dimensional domain, a celebrated theorem 
of Chabert \cite[Ch.~V]{CahCha97IVP} states that
every prime ideal of $\Int(D)$ lying over a maximal ideal $M$ of
finite index in $D$ is maximal and of the form
\[ M_\alpha = \{f\in \Int(D) \mid f(\alpha)\in \hat M\}, \]
where $\alpha$ is an element of the $M$-adic completion $\hat D_M$
of $D$ and $\hat M$ the maximal ideal of $\hat D_M$.

In fact, Chabert showed two separate statements independently -- 
both under the assumption that $D$ is Noetherian and one-dimensional 
and $M$ a maximal ideal of finite index of $D$:
\begin{enumerate}
\item\label{characterization}
Every maximal ideal of $\Int(D)$ containing $\Int(D,M)$ is
of the form $M_\alpha$ for some $\alpha\in \hat D_M$.
\item\label{containing}
Every maximal ideal of $\Int(D)$ lying over $M$ contains $\Int(D,M)$.
\end{enumerate}

For a simplified proof of Chabert's result, see \cite{Fri13IVA},
Lemma~4.4 and the remark following it.

We will show that a modified version of statement (\ref{characterization}) 
holds in far greater generality, for rings of functions. The modification 
consists in replacing elements of the $M$-adic completion by ultrafilters. 

Whether (\ref{containing}) holds or not for a particular $D$ and a
particular subring of $D^E$ will have to be examined separately. It is, 
in some sense, a question of density of the subring in the product 
$\prod_{e\in E}D$.

\comment{
for an ultrafilter $\U$ on $D$ we define
\[ M_{\U} = \{f\in \Int(D) \mid f^{-1}(M)\in \U\}, \]
where $f^{-1}(M)=\{d\in D\mid f(d)\in M\}$. It turns out that,
for every maximal ideal $M$ of finite index in an arbitrary domain $D$, 
every maximal ideal $\mathcal{Q}$ of $\Int(D)$ containing $\Int(D,M)$
is of the form $M_{\U}$ for an ultrafilter $\U$ on $D$.
}

We will work in the following setting:

\begin{Def}\label{funcring-def}
Let $D$ be a commutative ring and $E\subseteq D$. Let $\R$ be a 
commutative ring and $\varphi\colon \R\rightarrow \prod_{e\in E}D$
a monomorphism of rings. $\varphi$ allows us to interpret the elements
of $\R$ as functions from $E$ to $D$. 

If all constant functions are contained in $\varphi(\R)$, we call the 
pair $(\R,\varphi)$ a ring of functions from $E$ to $D$. We use
$\R=\fring{E}{D}$ (where $\varphi$ is understood) to denote 
a ring of functions from $E$ to $D$.
\end{Def}

\begin{rem}\label{constants}
For our considerations it is vital that $\R=\fring{E}{D}$
contain all constant functions, because we will make extensive
use of the following fact: when $\I$ is an ideal of $\R=\fring{E}{D}$,
$f\in \I$ and $g\in D[x]$ a polynomial with zero constant term, then
$g(f)\in \I$, and similarly, if $g$ is a polynomial in several variables
over $D$ with zero constant term, and an element of $\I$ is substituted
for each variable in $g$, then, an element of $\I$ results.
\end{rem}

Let us note that considerable research has been done on the
spectrum of a power of a ring $D^E=\prod_{d\in E}D$ or a product
of rings $\prod_{e\in E}D_e$. Gilmer and Heinzer 
\cite[Prop.~2.3]{GilHei94Imb} have determined the 
spectrum of an infinite product of local rings, and
Levy, Loustaunau and Shapiro \cite{LeLoSh91PSIPZ} that
of an infinite power of $\intz$. Our focus here is not on the
full product of rings, but on comparatively small subrings and 
the question of how much information about the spectrum of a ring 
can be obtained from its embedding in a power of a domain.

One ring can be embedded in different products: $\Int(D)$ can be
seen as a ring of functions from $K$ to $K$ as well as a ring of
functions from $D$ to $D$. We will glean a lot more information
about the spectrum of $\Int(D)$ from the second interpretation
than from the first. 

\section{Prime ideals corresponding to ultrafilters}

Let $\R=\fring{E}{D}$ be a ring of functions from $E$ to $D$ as in
Definition~\ref{funcring-def}. 
We will now make precise the concept of ideals corresponding to
ultrafilters, and the connection to ultraproducts
$\prod^{\U}_{e\in E} (D/M)$, where $M$ is a maximal ideal of $D$,
and $\U$ an ultrafilter on $E$.
First a quick review of filters, ultrafilters and ultraproducts:

\begin{Def}\label{filter-def}
Let $S$ be a set. A non-empty collection $\F$ of subsets of $S$
is called a filter on $S$ if
\begin{enumerate}
\item
$\emptyset \notin \F$.
\item
$A, B\in \F$ implies $A\cap B\in \F$.
\item
$A\subseteq C\subseteq S$ with $A\in \F$ implies $C\in \F$.
\end{enumerate}
A filter $\F$ on $S$ is called an ultrafilter on $S$ if, for
every $C\subseteq S$, either $C\in \F$ or $S\setminus C\in \F$.
\end{Def}

Let $S$ be a fixed set and $\pset(S)$ its power-set. 
For $C\in\pset(S)$, a {\em superset} of $C$ is a set $D\in \pset(S)$ with 
$C\subseteq D\subseteq S$.
A collection $\mathcal{C}$ of subsets of $S$ is 
said to have the {\em finite intersection property} if the intersection
of any finitely many members of $\mathcal{C}$ is non-empty. 

\begin{rem}\label{finite-intersection-property}
Clearly,
a necessary and sufficient condition for $\mathcal{C}\subseteq \pset(S)$ to 
be contained in a filter on $S$ is that $\mathcal{C}$ satisfies the finite 
intersection property. 
If the finite intersection property is satisfied, then
the supersets of finite intersections of members of $\mathcal{C}$ 
form a filter. 
\end{rem}

Although, strictly speaking, we do not need ultraproducts to prove 
our results, we will nevertheless introduce them, because they provide 
context, in particular to Lemma~\ref{ufprimes-prime}, and to sections~3
and~5. 

\begin{Def}\label{ultraproduct-def}
Let $S$ be an index set and $\U$ an ultrafilter on $S$. Suppose
we are given, for each $s\in S$, a ring $R_s$. Then the
ultraproduct of rings $\prod_{s\in S}^{\U} R_s$ is defined as
the direct product $\prod_{s\in S}R_s$ modulo the congruence
relation 
\[(r_s)_{s\in S} \sim (t_s)_{s\in S} \quad\Longleftrightarrow\quad
\{s\in S\mid r_s=t_s\}\in \U.\]
\end{Def}

Ultraproducts of other algebraic structures are defined analogously.
The usefulness of ultraproducts is captured by the Theorem of \L o\'s
(cf.~\cite[Chpt.~3.2]{GoJu95IncPh} or \cite[Prop~1.6.14]{Hin05mathlog})
which states that an ultraproduct $\prod_{s\in S}^{\U} R_s$ satisfies
a first-order formula if and only if the set of indices $s$ for which
$R_s$ satisfies the formula is in $\U$. Here first-order formula means
a formula in the first-order language whose only non-logical symbols
(apart from the equality sign) are symbols for the algebraic operations; 
for instance, $+$ and $\cdot$ in the case of an ultraproduct of rings.

\begin{Def}\label{ufprime-def}
Let $D$ be a domain, $E\subseteq D$, $\R=\fring{E}{D}$ a ring of
functions, $I$ an ideal of $D$ and $\F$ a filter on $E$.

For $f\in \fring{E}{D}$, we 
let $f^{-1}(I)=\{e\in E\mid f(e)\in I\}$ and define
\[ \IF = \{f\in \fring{E}{D} \mid f^{-1}(I)\in\F\} \]
\end{Def}

\begin{rem}\label{obviousinclusions} 
Let everything as in Definition~\ref{ufprime-def},
$I,J$ ideals of $D$ and $\F,\G$ filters on $E$. 
Some easy consequences of Definition~\ref{ufprime-def} are:
\begin{enumerate}
\item
If $I\neq D$ then $\IF\neq \R$.
\item
$\IF$ is an ideal of $\R$ containing
$\fring{E}{I}=\{f\in\R\mid f(E)\subseteq I\}$.
\item
$I\subseteq J\Longrightarrow \IF\subseteq \JF$
\item
$\F\subseteq \G\Longrightarrow \IF\subseteq \IG$
\end{enumerate}
\end{rem}

\begin{lem}\label{ufprimes-prime} 
Let $D$ be a domain, $E\subseteq D$, and $\R=\fring{E}{D}$ a ring of
functions from $E$ to $D$.  

Then for every prime ideal $P$ of $D$ and every ultrafilter $\U$ on $E$,
$\PU$ is a prime ideal of $\mathcal{R}$.
\end{lem}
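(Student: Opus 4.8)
The plan is to verify the two defining properties of a prime ideal separately: properness, and the implication $fg\in\PU \Rightarrow f\in\PU \text{ or } g\in\PU$. Properness is essentially already available. An ultrafilter is in particular a filter, so Remark~\ref{obviousinclusions} shows that $\PU = P_{\U}$ is an ideal of $\R$; and since a prime ideal $P$ of $D$ is proper, the same remark (item (1), applied with $I=P$) gives $\PU\neq\R$. Thus only the primeness condition needs work.

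The key observation is that primeness of $P$ lets us rewrite the preimage of a product as a union of preimages. Concretely, for $f,g\in\R$ and $e\in E$ we have $(fg)(e)=f(e)g(e)\in P$ if and only if $f(e)\in P$ or $g(e)\in P$, precisely because $P$ is a prime ideal of $D$. Reading this off pointwise over all $e\in E$ yields the set identity $(fg)^{-1}(P)=f^{-1}(P)\cup g^{-1}(P)$. This is the one place where primeness of $P$, as opposed to its being merely an ideal, enters, and it is the crux of the argument.

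Now suppose $fg\in\PU$, that is, $(fg)^{-1}(P)\in\U$. By the identity above, $f^{-1}(P)\cup g^{-1}(P)\in\U$. I then invoke the standard disjunction property of ultrafilters: if a union of two sets belongs to $\U$, then at least one of them does. (If neither did, then both complements would lie in $\U$ by the ultrafilter axiom; their intersection, which is the complement of the union, would then lie in $\U$ as well, forcing $\emptyset\in\U$ against the definition of a filter.) Hence $f^{-1}(P)\in\U$ or $g^{-1}(P)\in\U$, which is exactly $f\in\PU$ or $g\in\PU$.

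I expect no serious obstacle here; the argument is short once the preimage-of-a-product identity is isolated. The only points demanding care are to make explicit that this identity relies on $P$ being prime, and that the passage from a union lying in $\U$ to one of its members lying in $\U$ is precisely the ultrafilter axiom in use. One could instead argue that $\R/\PU$ embeds into the ultraproduct $\prod^{\U}_{e\in E}(D/P)$, which is a domain by \L o\'s's theorem since each $D/P$ is a domain, so that $\PU$ is prime; but the direct computation above is more elementary and avoids that machinery.
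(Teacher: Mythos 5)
Your proof is correct and takes essentially the same route as the paper's: a direct verification using the identity $(fg)^{-1}(P)=f^{-1}(P)\cup g^{-1}(P)$ (valid because $P$ is prime) together with the fact that an ultrafilter containing a union of two sets must contain one of them, plus properness since $P\neq D$. The ultraproduct argument you mention as an alternative is also exactly the second viewpoint the paper discusses immediately after its proof.
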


\begin{proof} Easy direct verification: let $fg\in \PU$; 
because $P$ is a prime ideal of $D$, the inverse image of $P$ under 
$f\cdot g$ is the union of $f^{-1}(P)$ and $g^{-1}(P)$. If the union
of two sets is in an ultrafilter, then one of them must be in the
ultrafilter. Therefore, $f\in \PU$ or $g\in \PU$. Also, $\PU$ cannot
be all of $\mathcal{R}$ because it doesn't contain the constant 
function $1$.
\end{proof}

One way of looking at $\PU$ is by
considering the following commuting diagram of ring-homomorphisms,
where $\pi$ and $\pi_1$ mean applying the canonical projection
in each factor of the product, and $\sigma$ and $\sigma_1$ mean
factoring through the defining congruence relation of an ultraproduct.


\[
\begin{tikzcd}
\R \arrow[r, "\varphi"]&
\prod_{e\in E}D \arrow[r, "\sigma_1"]\arrow[d, "\pi"] &
\prod_{e\in E}^{\U}D\arrow[d, "\pi_1"]\\
&\prod_{e\in E}(D/P) \arrow[r, "\sigma"]&
\prod_{e\in E}^{\U}(D/P)
\end{tikzcd}
\]

\comment{
\[
\begin{tikzcd}
\prod_{e\in E}D \arrow[r, "\sigma_1"]\arrow[d, "\pi_2"] &
\prod_{e\in E}^{\U}D\arrow[d, "\pi_1"]\\
\prod_{e\in E}(D/P) \arrow[r, "\sigma_2"]&
\prod_{e\in E}^{\U}(D/P)
\end{tikzcd}
\]
}

$\PU$ is the kernel of the following composition of ring homomorphisms:
\[\varphi\colon \mathcal{R}\rightarrow \prod_{e\in E}D\]
followed by the canonical projection
\[\pi \colon \prod_{e\in E}D\rightarrow \prod_{e\in E}(D/P)\]
and the canonical projection 
\[\sigma \colon \prod_{e\in E}(D/P)\rightarrow \prod^{\U}_{e\in E}(D/P)\]

Since $D/P$ is an integral domain, any ultraproduct of copies of
$D/P$ is also an integral domain, by the Theorem of \L o\'s. 
Therefore $(0)$ is a prime ideal of $\prod^{\U}_{e\in E}(D/P)$ and
hence $\PU$ a prime ideal of $\mathcal{R}$.  
We also see that $\PU$ is the inverse image of a prime ideal of
$\prod_{e\in E} D$ under $\varphi$, and further,
of a prime ideal of the ultraproduct $\prod^{\U}_{e\in E} D$
under $\sigma_1\circ\varphi$.

\section{The set of zero-loci mod $M$ of an ideal of the ring of functions}

As before, $D$ is a domain with quotient field
$K$, $E\subseteq D$ and $\R=\fring{E}{D}$ a ring of 
functions from $E$ to $D$ as in Def.~\ref{funcring-def}. Especially,
recall from Def.~\ref{funcring-def} that $\R$ is assumed to contain 
all constant functions.

\begin{Def}\label{zero-locus-def}
For $M\subseteq D$ and $f\in\R=\fring{E}{D}$, let
\[ f^{-1}(M)=\{e\in E\mid f(e)\in M\}. \]

For an  ideal $M$ of $D$ and an ideal $\I$ of $\R$, let
\[  \Z_M(\I) = \{f^{-1}(M)\mid f\in\I\} \]
\end{Def}

Recall from Def.~\ref{ufprime-def} that for a filter $\F$ on $E$,
\[ \MF = \{f\in \fring{E}{D} \mid f^{-1}(M)\in\F\} \]

\begin{rem}\label{ZMproperties} Note that the above definition implies
\begin{enumerate}
\item
$\I\subseteq \J \Longrightarrow \Z_M(\I) \subseteq \Z_M(\J)$
\item
$\I\subseteq \MF \Longleftrightarrow \Z_M(\I)\subseteq \F$
\end{enumerate}
\end{rem}

\begin{lem}
Let $M$ be an ideal of $D$ and $\I$ an ideal of $\R$.
The following are equivalent:
\begin{description}
\item[(a)]
There exists a filter $\F$ on $E$ such that $\I\subseteq M_{\F}$.
\item[(b)]
$\Z_M(\I)$ satisfies the finite intersection property. 
\end{description}
\end{lem}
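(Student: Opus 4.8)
The plan is to read off both implications directly from the two remarks already established, namely Remark~\ref{ZMproperties}(2), which records the equivalence $\I\subseteq\MF\Leftrightarrow\Z_M(\I)\subseteq\F$, together with Remark~\ref{finite-intersection-property}, which states that a collection of subsets of $E$ is contained in some filter on $E$ precisely when it satisfies the finite intersection property. Once these are in hand, the lemma is essentially a bookkeeping exercise, so I do not expect any genuine obstacle; the content lies entirely in recognizing that the two remarks dovetail.

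For (a)$\Rightarrow$(b), I would start from a filter $\F$ with $\I\subseteq\MF$ and apply Remark~\ref{ZMproperties}(2) to obtain $\Z_M(\I)\subseteq\F$. Since $\F$ is a filter, it is closed under finite intersections and omits $\emptyset$, so any finite intersection of members of $\F$ again lies in $\F$ and is in particular non-empty. Because $\Z_M(\I)$ is a subcollection of $\F$, every finite intersection of members of $\Z_M(\I)$ is such an intersection and hence non-empty; that is exactly the finite intersection property.

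For (b)$\Rightarrow$(a), I would invoke Remark~\ref{finite-intersection-property} in the other direction: the finite intersection property of $\Z_M(\I)$ produces a filter $\F$ on $E$ with $\Z_M(\I)\subseteq\F$ (concretely, the filter of supersets of finite intersections of members of $\Z_M(\I)$). Feeding $\Z_M(\I)\subseteq\F$ back into Remark~\ref{ZMproperties}(2) gives $\I\subseteq\MF$, which is precisely statement~(a).

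The only point meriting a word of care is the degenerate case in which some $f\in\I$ has $f^{-1}(M)=\emptyset$. Then $\emptyset\in\Z_M(\I)$, so the finite intersection property fails; and indeed no filter can contain $\Z_M(\I)$, since filters omit $\emptyset$. Thus (a) and (b) fail together in that case, and the equivalence is unaffected. (Note also that $\Z_M(\I)$ is never vacuously at issue, since the zero function lies in $\I$ and contributes $E\in\Z_M(\I)$.)
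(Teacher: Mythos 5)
Your proof is correct and follows exactly the paper's own argument: both directions are read off from Remark~\ref{ZMproperties}(2) combined with Remark~\ref{finite-intersection-property}, with the filter in (b)$\Rightarrow$(a) constructed as the supersets of finite intersections of members of $\Z_M(\I)$. Your closing remark about the degenerate case $\emptyset\in\Z_M(\I)$ is not needed for the equivalence but is harmless.
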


\begin{proof}
If $\I\subseteq M_{\F}$, then $\Z_M(\I)$ is contained in ${\F}$
and hence satisfies the finite intersection property. Conversely,
if $\Z_M(\I)$ satisfies the finite intersection property then,
by Remark~\ref{finite-intersection-property},
the supersets of finite intersections of sets in $\Z_M(\I)$ form a 
filter ${\F}$ on $E$ for which $\Z_M(\I)\subseteq \F$ and hence 
$\I\subseteq M_{\F}$.
\end{proof}

In the case where $\fring{E}{D}=\prod_{e\in E}D$ is the ring
of all functions from $E$ to $D$, much more can be said; see 
the papers by Gilmer and Heinzer \cite[Prop.~2.3]{GilHei94Imb} 
(concerning local rings) and Levy, Loustaunau and Shapiro 
\cite{LeLoSh91PSIPZ} (concerning $D=\intz$).

For a field $K$ that is not algebraically closed, 
we will need, for an arbitrary $n\ge 2$, an $n$-ary form that has 
no zero but the trivial one. For this purpose, recall
how to define a norm form: if $L:K$ is an $n$-dimensional
field extension, multiplication by any $w\in L$ is a 
$K$-endomorphism $\psi_w$ of $L$. For a fixed choice of
a $K$-basis of $L$, map every $w\in L$ to the determinant
of the matrix of $\psi_w$ with respect to the chosen basis.
This mapping, regarded as a function of the coordinates of $w$ 
with respect to the chosen basis, is easily seen to be an 
$n$-ary form that has no zero but the trivial one.

\begin{lem}\label{intersections-notalgcl}
Let $M$ be a maximal ideal of $D$ such that $D/M$ 
is not algebraically closed. 
Then for every ideal $\mathcal{I}$ of $\R=\fring{E}{D}$,
$\Z_M(\I)$ is closed under finite intersections.
\end{lem}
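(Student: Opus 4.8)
The plan is to represent each finite intersection of members of $\Z_M(\I)$ as $h^{-1}(M)$ for a single $h\in\I$, manufactured by substituting elements of $\I$ into an anisotropic form over the residue field $D/M$. Concretely, suppose $A_1,\dots,A_n\in\Z_M(\I)$ and write $A_i=f_i^{-1}(M)$ with $f_i\in\I$. It suffices to produce $h\in\I$ with $h^{-1}(M)=\bigcap_{i=1}^n f_i^{-1}(M)$, since then $\bigcap_i A_i=h^{-1}(M)\in\Z_M(\I)$, which is precisely closure under finite intersections.

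For the form, I would invoke the construction recalled just before the lemma: because $M$ is maximal, $D/M$ is a field, and by hypothesis it is not algebraically closed, so there is an $n$-ary form $\bar F$ over $D/M$ whose only zero is the trivial one. Lifting each coefficient of $\bar F$ arbitrarily to $D$ yields a homogeneous polynomial $F\in D[X_1,\dots,X_n]$ of the same positive degree reducing to $\bar F$ modulo $M$. The decisive property is then the following equivalence for $a_1,\dots,a_n\in D$: reduction modulo $M$ gives $\overline{F(a_1,\dots,a_n)}=\bar F(\bar a_1,\dots,\bar a_n)$, and since $\bar F$ has only the trivial zero this vanishes exactly when every $\bar a_i=0$; hence $F(a_1,\dots,a_n)\in M$ if and only if $a_i\in M$ for all $i$.

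Finally, set $h=F(f_1,\dots,f_n)$, computed coordinatewise in $\R\subseteq\prod_{e\in E}D$. As $F$ is homogeneous of positive degree it has zero constant term, so Remark~\ref{constants} guarantees $h\in\I$. Evaluating at $e\in E$ and applying the equivalence above to $a_i=f_i(e)$ shows that $h(e)\in M$ precisely when $f_i(e)\in M$ for every $i$; that is, $h^{-1}(M)=\bigcap_{i=1}^n f_i^{-1}(M)$, as desired. I expect the only real content to lie in the middle step, namely securing $\bar F$ and the mod-$M$ equivalence that converts its anisotropy into ``$F(\vec a)\in M$ iff all $a_i\in M$''; once the form is in hand, membership of $h$ in $\I$ is immediate from Remark~\ref{constants} (this is exactly where containing all constant functions is used), and the computation of $h^{-1}(M)$ is routine. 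A binary form together with an induction on $n$ would serve equally well, but the $n$-ary form disposes of the whole intersection in one stroke.
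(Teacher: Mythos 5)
Your overall strategy is the same as the paper's: substitute the given functions into a form over $D$ lifting an anisotropic form over $D/M$, use Remark~\ref{constants} to keep the result in $\I$, and read off the preimage of $M$. The lifting step, the mod-$M$ equivalence, and the computation $h^{-1}(M)=\bigcap_i f_i^{-1}(M)$ are all correct. The gap sits exactly at the point you yourself single out as the only real content: securing $\bar F$. The construction recalled before the lemma produces a form with only the trivial zero in $d$ variables, where $d$ is the degree of some finite non-trivial extension of $D/M$; it does \emph{not} produce one in an arbitrary prescribed number $n$ of variables, because a field that is not algebraically closed need not have extensions of every degree. For instance, $\mathbb{R}$ has no cubic extension, so for three functions $f_1,f_2,f_3$ the norm-form construction gives nothing in three variables; a separably closed but not algebraically closed field of characteristic $p$ has only extensions of $p$-power degree. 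So the sentence ``so there is an $n$-ary form $\bar F$ over $D/M$ whose only zero is the trivial one'' does not follow from what you cite.

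The paper closes this hole with one extra move you omit: it identifies $n-1$ of the variables of the norm form (whatever degree it happens to have), obtaining a \emph{binary} form $\bar s$ with no zero in $(D/M)^2$ other than $(0,0)$, and then proves closure under pairwise intersections, which yields finite intersections by the obvious induction --- the fallback you mention in your last sentence. Note that even that fallback needs the identification trick or some substitute: a binary anisotropic form is not directly available as a norm form unless $D/M$ has a quadratic extension. Alternatively, you can salvage your one-stroke $n$-ary argument by composing the binary form with itself, e.g.\ setting $\bar F_2=\bar s$ and $\bar F_{k+1}(x_1,\dots,x_{k+1})=\bar s\bigl(\bar F_k(x_1,\dots,x_k),\,x_{k+1}^{m_k}\bigr)$ where $m_k$ is the degree of $\bar F_k$; this is again homogeneous with only the trivial zero. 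With either supplement your proof is complete; as written, the key existence claim is unjustified.
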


\begin{proof}
Given $f,g\in\I$, we show that there exists $h\in\I$
with 
\[ h^{-1}(M) = f^{-1}(M)\cap g^{-1}(M) . \] 
Consider any finite-dimensional non-trivial field 
extension of $D/M$, and let $n$ be the degree of the extension.
The norm form of this field extension is a homogeneous polynomial
in $n\ge 2$ indeterminates whose only zero in $(D/M)^n$ is the trivial one.
By identifying $n-1$ variables, we get a binary form
$\bar s\in (D/M)[x,y]$ with no zero in $(D/M)^2$ other than $(0,0)$.
Let $s\in D[x,y]$ be a binary form that reduces to $\bar s$
when the coefficients are taken mod $M$.

Now, given $f$ and $g$ in $\I$, we set $h=s(f,g)$. 
By the fact that $\R$ contains all constant functions, $h$ is in $\I$.
Also, $h(e)\in M$ if and only if both $f(e)\in M$ and $g(e)\in M$, as
desired.
\end{proof}

\begin{lem}\label{intersections-finitecase}
Let $M$ be a maximal ideal of $D$ and $\R=\fring{E}{D}$ a ring
of functions such that every $f\in\R$ takes values in only finitely
many residue classes mod $M$.

Then for every ideal $\mathcal{I}$ of $\R$,
$\Z_M(\I)$ is closed under finite intersections.
\end{lem}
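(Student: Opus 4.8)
The plan is to produce, exactly as in Lemma~\ref{intersections-notalgcl}, for any $f,g\in\I$ an element $h\in\I$ with $h^{-1}(M)=f^{-1}(M)\cap g^{-1}(M)$; exhibiting such an $h$ shows that $f^{-1}(M)\cap g^{-1}(M)\in\Z_M(\I)$, which is precisely closure of $\Z_M(\I)$ under finite intersections. The obstruction that forced the hypothesis ``$D/M$ not algebraically closed'' in Lemma~\ref{intersections-notalgcl} was the need for a binary form vanishing only at the origin on all of $(D/M)^2$. Here I would circumvent this by using that each of $f$ and $g$ takes only finitely many residues mod $M$, so that one only has to control a polynomial on a finite set of points of the field $D/M$.

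First I would normalize each function separately. Since $M$ is maximal, $D/M$ is a field, and $f$ hits only finitely many residues mod $M$; let $R$ be the finite set of nonzero residues that $f$ takes, and use Lagrange interpolation over $D/M$ to find $\bar p\in (D/M)[x]$ with $\bar p(0)=0$ and $\bar p(r)=1$ for every $r\in R$. Because $0$ is an interpolation node with assigned value $0$, the polynomial $\bar p$ has zero constant term, so it lifts to some $p\in D[x]$ with zero constant term. By Remark~\ref{constants}, $f^{\ast}:=p(f)\in\I$, and by construction $f^{\ast}(e)\in M$ exactly when $f(e)\in M$, while $f^{\ast}$ takes only the residues $0$ and $1$ mod $M$; in particular $(f^{\ast})^{-1}(M)=f^{-1}(M)$. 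The same procedure applied to $g$ yields $g^{\ast}\in\I$ with $(g^{\ast})^{-1}(M)=g^{-1}(M)$ and values in $\{0,1\}$ mod $M$.

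Finally I would combine the two normalized functions by a Boolean ``or'', setting $h=f^{\ast}+g^{\ast}-f^{\ast}g^{\ast}$. The polynomial $x+y-xy$ has zero constant term, so $h\in\I$ again by Remark~\ref{constants}. On residues in $\{0,1\}$ the expression $u+v-uv$ vanishes precisely when $u=v=0$, whence $h(e)\in M$ if and only if both $f^{\ast}(e)\in M$ and $g^{\ast}(e)\in M$, i.e.\ if and only if $f(e)\in M$ and $g(e)\in M$. Thus $h^{-1}(M)=f^{-1}(M)\cap g^{-1}(M)$, as required.

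I expect the only real subtlety to be the normalization step, where both hypotheses are genuinely used: the field structure of $D/M$ makes the interpolation possible, and the finiteness of the value set is what allows a single polynomial to separate ``zero'' from ``nonzero'' on the relevant residues -- something impossible with one fixed form when $D/M$ is algebraically closed. A minor point worth flagging is that the naive combination $f^{\ast}+g^{\ast}$ would fail in characteristic $2$ (where $1+1=0$), which is exactly why the corrective term $-f^{\ast}g^{\ast}$ is needed.
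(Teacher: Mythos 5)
Your proof is correct and takes essentially the same approach as the paper's: the paper interpolates a single bivariate polynomial $\bar s\in (D/M)[x,y]$ with $\bar s(0,0)=0$ and $\bar s\equiv 1$ on the rest of the finite value grid $A\times B$, lifts it to $s\in D[x,y]$ with zero constant term, and sets $h=s(f,g)$; your two univariate normalizations combined via $x+y-xy$ simply construct such an $s$ explicitly. Both arguments rest on the same two ingredients -- Lagrange interpolation over the field $D/M$ on the finitely many residues attained, and Remark~\ref{constants} to keep the resulting function in $\I$.
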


\begin{proof}
Again, given $f,g\in\I$, we show that there exists $h\in\I$
with 
\[ h^{-1}(M) = f^{-1}(M)\cap g^{-1}(M) . \] 

Let $A,B\subseteq D/M$ be finite sets of residue classes of $D$ mod $M$
such that $f(E)$ is contained in the union of $A$
and $g(E)$ in the union of $B$. 

We can interpolate any function from $(D/M)^2$ to $(D/M)$ at any
finite set of arguments by a polynomial in $(D/M)[x,y]$. Pick
$\bar s\in (D/M)[x,y]$ with $\bar s(0,0)=0$ and $\bar s(a,b)=1$ 
for all $(a,b)\in (A\times  B)\setminus \{(0,0)\}$.
Let $s\in D[x,y]$ be a polynomial with zero constant coefficient that 
reduces to $\bar s$ when the coefficients are taken mod $M$.

Now, given $f$ and $g$ in $\I$, we set $h=s(f,g)$. 
By the fact that $\R$ contains all constant functions, $h$ is in $\I$.
Also, $h(e)\in M$ if and only if both $f(e)\in M$ and $g(e)\in M$, as
desired.
\end{proof}

\begin{Def}
Let $\R=\fring{E}{D}$ be a ring of functions and $M$ an ideal of
$D$. We call $f\in\R$ an $M$-unit-valued function if $f(e)+M$ is
a unit in $D/M$ for every $e\in E$. 
\end{Def}

\begin{thm}\label{maxideal} 
Let $M$ be a maximal ideal of $D$ and $\I$ an ideal of 
$\R=\fring{E}{D}$. Assume that either $D/M$ is not
algebraically closed or that each function in $\R$ takes values 
in only finitely many residue classes mod $M$.

\begin{enumerate}
\item\label{equivalence}
$\I$ is contained in an ideal of the form $\MF$ for 
some filter $\F$ on $E$ if and only if $\I$ 
contains no $M$-unit-valued function.

\item\label{conditionalmax}
Every ideal $\Q$ of $\R$ that is maximal with respect to
not containing any $M$-unit-valued function is of the form
$\MU$ for some ultrafilter $\U$ on $E$.

\item\label{max}
In particular, every maximal ideal of $\R$ that does not 
contain any $M$-unit-valued function is of the form
$\MU$ for some ultrafilter $\U$ on $E$.
\end{enumerate}
\end{thm}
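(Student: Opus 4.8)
The plan is to reduce the entire statement to a combinatorial fact about the set system $\Z_M(\I)$. The first thing I would observe is that, since $M$ is maximal, $D/M$ is a field, so an element of $D/M$ is a unit precisely when it is nonzero; hence a function $f\in\R$ is $M$-unit-valued if and only if $f^{-1}(M)=\emptyset$. Consequently, ``$\I$ contains no $M$-unit-valued function'' is literally the assertion $\emptyset\notin\Z_M(\I)$. The role of the standing hypothesis (either $D/M$ not algebraically closed, or every $f\in\R$ taking only finitely many values mod $M$) is exactly to invoke Lemma~\ref{intersections-notalgcl} or Lemma~\ref{intersections-finitecase}, which guarantee that $\Z_M(\I)$ is closed under finite intersections. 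This closure is the real engine; once it is available, everything else is assembly.

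For part~(\ref{equivalence}) I would argue both directions directly. If $\I\subseteq\MF$, then $\Z_M(\I)\subseteq\F$ by Remark~\ref{ZMproperties}(2), and since $\emptyset\notin\F$ we get $\emptyset\notin\Z_M(\I)$; so no $M$-unit-valued function lies in $\I$, and this direction uses no hypothesis. Conversely, if $\I$ contains no $M$-unit-valued function, then $\emptyset\notin\Z_M(\I)$, and combined with closure under finite intersections every finite intersection of members of $\Z_M(\I)$ is again a nonempty member, so $\Z_M(\I)$ has the finite intersection property. Remark~\ref{finite-intersection-property} then yields a filter $\F$ with $\Z_M(\I)\subseteq\F$, whence $\I\subseteq\MF$ again by Remark~\ref{ZMproperties}(2).

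For part~(\ref{conditionalmax}), given such a maximal $\Q$, part~(\ref{equivalence}) supplies a filter $\F$ with $\Q\subseteq\MF$. I would extend $\F$ to an ultrafilter $\U$ by the ultrafilter lemma, so that $\F\subseteq\U$ gives $\MF\subseteq\MU$ by Remark~\ref{obviousinclusions}(4), and hence $\Q\subseteq\MU$. Now $\MU$ is an ideal of $\R$ by Remark~\ref{obviousinclusions}(2) (applied to the ideal $M$ of $D$), and it contains no $M$-unit-valued function, because any such $f$ satisfies $f^{-1}(M)=\emptyset\notin\U$. Thus $\MU$ is an ideal avoiding $M$-unit-valued functions and containing $\Q$, and maximality of $\Q$ forces $\Q=\MU$. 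Part~(\ref{max}) then follows immediately: if $\Q$ is a genuine maximal ideal of $\R$ avoiding $M$-unit-valued functions, then any ideal strictly larger than $\Q$ is all of $\R$, which does contain an $M$-unit-valued function, namely the constant $1$ (as $1+M$ is a unit in $D/M$); hence $\Q$ is maximal with respect to the weaker property, and part~(\ref{conditionalmax}) applies.

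The genuine difficulty has, in effect, already been discharged inside Lemmas~\ref{intersections-notalgcl} and~\ref{intersections-finitecase}; within the theorem proper the only point demanding care is the bookkeeping of inclusion directions in part~(\ref{conditionalmax})—specifically, checking that enlarging $\F$ to the ultrafilter $\U$ cannot smuggle an $M$-unit-valued function into $\MU$. That check rests entirely on the elementary fact that the empty set belongs to no filter, which is why the equivalence ``$M$-unit-valued $\iff$ empty zero-locus'' noted at the outset is the linchpin of the whole argument.
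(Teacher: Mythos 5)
Your proof is correct and follows essentially the same route as the paper's: both directions of part~(\ref{equivalence}) via the closure-under-intersections Lemmata~\ref{intersections-notalgcl} and~\ref{intersections-finitecase} together with Remarks~\ref{finite-intersection-property} and~\ref{ZMproperties}, then part~(\ref{conditionalmax}) by refining $\F$ to an ultrafilter and invoking Remark~\ref{obviousinclusions}, with part~(\ref{max}) as a special case. Your only additions are making explicit two points the paper leaves implicit --- the equivalence ``$M$-unit-valued $\iff f^{-1}(M)=\emptyset$'' and the observation that a maximal ideal avoiding $M$-unit-valued functions is automatically maximal with respect to that property since $\R$ contains the constant $1$ --- both of which are correct.
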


\begin{proof}
Ad (\ref{equivalence}).
If $\I$ is contained in an ideal of the form $\MF$, $\I$ cannot
contain any $M$-unit-valued function, because $\F$ doesn't contain
the empty set. 

Conversely, suppose that $\I$ does not contain any $M$-unit-valued function.
Then $\emptyset\notin\Z_M(\I)$.
By Lemmata~\ref{intersections-notalgcl} and \ref{intersections-finitecase},
$\Z_M(\I)$ is closed under finite intersections.
$\Z_M(\I)$, therefore, satisfies the finite intersection property.
By Remark~\ref{finite-intersection-property}, $\Z_M(\I)$ is
contained in a filter $\F$ on $E$. 
For this filter, $\I\subseteq \MF$, by Remark~\ref{ZMproperties}.

Ad (\ref{conditionalmax}). 
Suppose $\Q$ is maximal with respect to not 
containing any $M$-unit-valued function.
By (\ref{equivalence}), $\Q\subseteq \MF$ for some filter $\F$. 
Refine $\F$ to an ultrafilter $\U$.
Then, by Remark~\ref{obviousinclusions}, 
$\Q\subseteq M_\mathcal{F} \subseteq M_\mathcal{U}$, and
$\MU$ doesn't contain any $M$-unit-valued function.
Since $\Q$ is maximal with this property, $\Q=\MU$.

(\ref{max}) is a special case of (\ref{conditionalmax}). 
\end{proof}

\section{A dichotomy of maximal ideals}

In what follows, $D$ is always a domain with quotient field $K$,
$E\subseteq D$ and $\mathcal{R}=\fring{E}{D}$ a ring of functions 
from $E$ to $D$ as in Def.~\ref{funcring-def}. When the interpretation 
of $\mathcal{R}$ as a subring of $\prod_{e\in E}D$ is understood, 
then for $M\subseteq D$ we let 
\[\fring{E}{M}=\{f\in\mathcal{R}\mid f(E)\subseteq M\}.\]

\begin{prop}\label{dichotomy} 
Let $M$ be a maximal ideal of $D$ and $\mathcal{Q}$ a maximal ideal 
of $\mathcal{R}=\fring{E}{D}$. 
Then exactly one of the following two statements holds:
\begin{enumerate}
\item\label{d1}
$\mathcal{Q}$ contains 
$\fring{E}{M}=\{f\in\mathcal{R}\mid f(E)\subseteq M\}$
\item\label{d2}
$\mathcal{Q}$ contains an
element $f$ with $f(e)\equiv 1$ mod $M$ for all $e\in E$.
\end{enumerate}
\end{prop}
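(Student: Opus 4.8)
The plan is to run a clean case split governed by the maximality of $\mathcal{Q}$, after recording one structural fact: $\fring{E}{M}=\{f\in\R\mid f(E)\subseteq M\}$ is an ideal of $\R$. Indeed it is closed under addition, and for $f\in\fring{E}{M}$ and $r\in\R$ we have $(rf)(e)=r(e)f(e)\in M$ for every $e$, since $M$ is an ideal of $D$. This is the only property of $\fring{E}{M}$ that the argument needs.

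To see that at least one of (\ref{d1}), (\ref{d2}) holds, I would distinguish two cases according to whether $\fring{E}{M}\subseteq\mathcal{Q}$. If it is, then (\ref{d1}) holds and there is nothing more to do. Otherwise $\fring{E}{M}\not\subseteq\mathcal{Q}$, and since $\mathcal{Q}$ is maximal, the ideal $\mathcal{Q}+\fring{E}{M}$ strictly contains $\mathcal{Q}$ and hence equals all of $\R$. In particular the constant function $1$ decomposes as $1=q+m$ with $q\in\mathcal{Q}$ and $m\in\fring{E}{M}$. Then $q=1-m$ satisfies $q(e)=1-m(e)\equiv 1\pmod M$ for every $e\in E$, because $m(e)\in M$; thus $q$ is exactly the element required by (\ref{d2}). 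This case analysis is the heart of the matter: maximality forces the relation $1=q+m$, and subtracting off $m$ (which reduces to $0$ modulo $M$ everywhere) converts an element of $\mathcal{Q}$ into one that is congruent to $1$ modulo $M$ everywhere.

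For exclusivity — that (\ref{d1}) and (\ref{d2}) cannot hold simultaneously — I would argue by contradiction. Suppose $\mathcal{Q}$ contains $\fring{E}{M}$ and also contains an $f$ with $f(e)\equiv 1\pmod M$ for all $e$. Since $\R$ contains the constant function $1$, the difference $1-f$ lies in $\R$, and $(1-f)(e)\equiv 0\pmod M$ for every $e$, so $1-f\in\fring{E}{M}\subseteq\mathcal{Q}$. But then $f+(1-f)=1\in\mathcal{Q}$, contradicting that a maximal ideal is proper. Combining the two paragraphs yields that exactly one of the statements holds. I do not anticipate any real obstacle here: the whole proof rests on the maximality of $\mathcal{Q}$, the fact that $\fring{E}{M}$ is an ideal, and the elementary observation that ``$f\equiv 1$'' and ``$1-f\equiv 0$'' modulo $M$ are complementary conditions. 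The only mild surprise is that maximality of $M$ is never actually invoked — any ideal $M$ of $D$ would do.
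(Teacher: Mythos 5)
Your proof is correct and takes essentially the same route as the paper: the paper picks a single $g\in\fring{E}{M}\setminus\mathcal{Q}$ and uses maximality to write $1=hg+f$ with $f\in\mathcal{Q}$, which is just your decomposition $1=q+m$ via $\mathcal{Q}+\fring{E}{M}=\R$ in slightly different clothing, and the exclusivity argument ($1\in\mathcal{Q}$ otherwise) is identical. Your closing observation that maximality of $M$ is never used (any ideal of $D$ would do) is also accurate.
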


\begin{proof}
The two cases are mutually exclusive, because any ideal $\mathcal{Q}$
satisfying both statements must contain $1$.

Now suppose $\mathcal{Q}$ does not contain $\fring{E}{M}$. Let 
$g\in \fring{E}{M}\setminus \mathcal{Q}$. By the maximality of
$\mathcal{Q}$, $1=h(x)g(x)+f(x)$ for some $h\in\mathcal{R}$ and
$f\in \mathcal{Q}$.
We see that
$f(x)=1-h(x)g(x)\in \mathcal{Q}$ satisfies $f(e)\equiv 1$ mod $M$ 
for all $e\in E$.
\end{proof}

Recall that a function $f\in\R$ is called $M$-unit-valued if
$f(e)+M$ is a unit in $D/M$ for every $e\in E$.

\begin{lem}\label{unit-valued-equivalence}
Let $M$ be an ideal of $D$ and
$\mathcal{Q}$ an ideal of $\mathcal{R}=\fring{E}{D}$.
The following are equivalent:
\begin{description}
\item[(A)]\label{e1}
$\mathcal{Q}$ contains an
element $f$ with $f(e)\equiv 1$ mod $M$ for all $e\in E$.
\item[(B)]\label{e2}
$\mathcal{Q}$ contains an $M$-unit-valued function that 
takes values in only finitely many residue classes mod $M$.
\end{description}
\end{lem}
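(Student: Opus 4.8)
The plan is to treat the two implications very asymmetrically, since one is immediate and the other carries all the content. For \textbf{(A)}$\Rightarrow$\textbf{(B)} I would simply observe that a function $f\in\mathcal{Q}$ with $f(e)\equiv 1$ mod $M$ for all $e\in E$ is \emph{itself} an $M$-unit-valued function: its only value mod $M$ is the residue $1+M$, which is the multiplicative identity of $D/M$ and hence a unit. In particular $f$ meets only the single residue class $1+M$, so it is already the witness required by (B), and no further work is needed.

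The interesting direction is \textbf{(B)}$\Rightarrow$\textbf{(A)}. Here I am handed an $M$-unit-valued $f\in\mathcal{Q}$ whose values meet only finitely many residue classes, say $a_1+M,\dots,a_k+M$, each of which is a \emph{unit} of $D/M$. The strategy is to produce the witness for (A) in the shape $g=p(f)$ for a suitable polynomial $p\in D[x]$ with zero constant term: by Remark~\ref{constants}, substituting $f\in\mathcal{Q}$ into such a $p$ keeps the result inside the ideal $\mathcal{Q}$. It therefore suffices to find $\bar p\in (D/M)[x]$ with zero constant term such that $\bar p(a_i+M)=1+M$ for every $i$; lifting its coefficients to $p\in D[x]$ while keeping the constant term $0$, the function $g=p(f)$ then lies in $\mathcal{Q}$ and satisfies $g(e)+M=\bar p(f(e)+M)=1+M$ for all $e$, which is exactly (A).

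The heart of the matter is the construction of $\bar p$, and the subtlety I expect to be the main obstacle is that $M$ is only assumed to be an ideal, so $D/M$ need not be a field and ordinary Lagrange interpolation through the points $a_1+M,\dots,a_k+M$ is unavailable: it would require inverting the differences $a_i-a_j$, which need not be units. The device that sidesteps this is to exploit that it is precisely the \emph{values} $a_i+M$, and not their differences, that are invertible, and to take
\[ \bar p(x)=1-\prod_{i=1}^{k}\bigl(1-(a_i+M)^{-1}x\bigr). \]
This only ever divides by the units $a_i+M$, so its coefficients genuinely lie in $D/M$; its constant term is $1-\prod_i 1=0$; and for each $j$ the factor indexed by $i=j$ vanishes at $x=a_j+M$, forcing the whole product to vanish and hence $\bar p(a_j+M)=1+M$, as desired. (The degenerate case $M=D$, in which $D/M=0$ and both (A) and (B) hold vacuously, can be disposed of in a line.) With $\bar p$ in hand, the remaining steps — lifting to a $p$ with zero constant term and invoking Remark~\ref{constants} to land $g=p(f)$ in $\mathcal{Q}$ — are routine.
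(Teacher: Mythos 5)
Your proof is correct and takes essentially the same route as the paper: your interpolating polynomial $\bar p(x)=1-\prod_{i=1}^{k}\bigl(1-(a_i+M)^{-1}x\bigr)$ is, after rearrangement, exactly the reduction mod $M$ of the paper's function $u\bigl(\prod_{i=1}^{k}(g(x)-d_i)-(-1)^k d_1\cdots d_k\bigr)$, the paper merely building the lift directly in $D[x]$ from representatives $d_i$ and a multiplier $u$ inverse to $(-1)^{k+1}d_1\cdots d_k$, whereas you interpolate in $(D/M)[x]$ and lift afterwards. In both arguments the two key points coincide: only the values $a_i+M$ (never their differences) need to be inverted, and Remark~\ref{constants} (zero constant term) is what keeps the substituted function inside $\mathcal{Q}$.
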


\begin{proof}
To see that the a priori weaker statement implies the stronger,
let $g\in\mathcal{Q}$ be an $M$-unit-valued function taking only
finitely many different values mod $M$. Let $d_1,\ldots, d_k\in D$ 
be representatives of the finitely many residue classes mod $M$ 
intersecting $g(E)$ non-trivially, and $u\in D$ an inverse mod $M$
of $(-1)^{k+1}d_1\cdot\ldots\cdot d_k$.

Then
\[h(x)=\prod_{i=1}^k(g(x)-d_i) - (-1)^k d_1\cdot\ldots\cdot d_k\]
is in $\mathcal{Q}$ and 
$h(e)\equiv (-1)^{k+1} d_1\cdot\ldots\cdot d_k$ mod $M$
for all $e\in E$. Therefore $f(x)=uh(x)\in \mathcal{Q}$ satisfies 
$f(e)\equiv 1$ mod $M$ for all $e\in E$.
\end{proof}

\begin{prop}\label{finitecase} 
Let $M$ be a maximal ideal of $D$ and 
$\mathcal{Q}$ a maximal ideal of $\mathcal{R}=\fring{E}{D}$.
If each $f\in \mathcal{R}$ takes values in only finitely many 
residue classes mod $M$ 
\textrm{(}in particular, if $D/M$ happens to be finite\textrm{)}
then exactly one of the following statements holds:
\begin{enumerate}
\item\label{f1}
$\mathcal{Q}$ contains 
$\fring{E}{M}=\{f\in\mathcal{R}\mid f(E)\subseteq M\}$
\item\label{f2}
$\mathcal{Q}$ contains an $M$-unit-valued function.
\end{enumerate}
\end{prop}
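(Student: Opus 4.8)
The plan is to reduce everything to Proposition~\ref{dichotomy} together with Lemma~\ref{unit-valued-equivalence}. Proposition~\ref{dichotomy} already asserts, for any maximal ideal $M$ and any maximal ideal $\mathcal{Q}$, that exactly one of its two alternatives holds: either (\ref{d1}) $\mathcal{Q}\supseteq\fring{E}{M}$, or (\ref{d2}) $\mathcal{Q}$ contains a function $f$ with $f(e)\equiv 1$ mod $M$ for all $e\in E$. The first of these is verbatim statement (\ref{f1}) of the present proposition, so the whole claim will follow once I show that, under the standing finiteness hypothesis, the alternative (\ref{d2}) is equivalent to the alternative (\ref{f2}).

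One direction is immediate and needs no hypothesis: if $\mathcal{Q}$ contains an $f$ with $f(e)\equiv 1$ mod $M$ everywhere, then since $1+M$ is a unit in the field $D/M$, this $f$ is $M$-unit-valued, so (\ref{d2}) implies (\ref{f2}). For the converse I would start from an arbitrary $M$-unit-valued function $g\in\mathcal{Q}$ and invoke the hypothesis: by assumption every element of $\mathcal{R}$, and in particular $g$, takes values in only finitely many residue classes mod $M$. Hence $g$ is an $M$-unit-valued function taking only finitely many values mod $M$, which is precisely condition (B) of Lemma~\ref{unit-valued-equivalence}. The lemma then supplies an $f\in\mathcal{Q}$ with $f(e)\equiv 1$ mod $M$ for all $e$, which is condition (A) and hence statement (\ref{d2}).

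With (\ref{f1}) identified with (\ref{d1}) and (\ref{f2}) shown equivalent to (\ref{d2}), the ``exactly one'' assertion transfers directly from Proposition~\ref{dichotomy}. The only step carrying any content is the converse just described, and its whole point is that the finiteness hypothesis is exactly what is required to feed an a priori arbitrary $M$-unit-valued function into the nontrivial implication (B) $\Rightarrow$ (A) of Lemma~\ref{unit-valued-equivalence}; without it, an $M$-unit-valued function in $\mathcal{Q}$ need not yield one congruent to $1$ mod $M$. I therefore expect no genuine obstacle beyond recognizing this role of the hypothesis.
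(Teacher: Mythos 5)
Your proof is correct and follows exactly the paper's route: the paper likewise derives this proposition directly from Proposition~\ref{dichotomy} and Lemma~\ref{unit-valued-equivalence}, with the finiteness hypothesis serving precisely to feed an arbitrary $M$-unit-valued function into implication (B) $\Rightarrow$ (A) of the lemma. You have merely written out the details that the paper leaves implicit.
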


\begin{proof} This follows directly from 
Proposition~\ref{dichotomy} and Lemma~\ref{unit-valued-equivalence}.
\end{proof}

The Propositions in this section partition the maximal ideals of $\R$ 
lying over a maximal ideal $M$ of $D$ into two types: those containing 
$\fring{E}{M}$ (the kernel of the restriction to $\R$ of the canonical 
projection
$\pi\colon \prod_{e\in E}D\longrightarrow \prod_{e\in E}(D/M)$),
and the others.

In some cases, it is known that all maximal ideals of $\R$ lying
over $M$ contain $\fring{E}{M}$, notably if $\mathcal{R}=\Int(D)$ 
and $M$ is finitely generated and of finite index in $D$ 
\cite[Ch.~V]{CahCha97IVP}, \cite[Lemma~4.4]{Fri13IVA}. We will
find a sufficient condition for all maximal ideals of $\R$ lying
over $M$ to contain $\fring{E}{M}$ in Theorem~\ref{REPcontainedinQ}.

We must not discount the possibility of a maximal ideal $\mathcal{Q}$ 
lying over $M$ containing an $M$-unit-valued function, however. 
If $D$ is an infinite domain, $D[x]$ is embedded in $D^D$ by mapping 
every polynomial to the corresponding polynomial function. 
When $D/M$ is not algebraically closed, then there are certainly 
maximal ideals of $D[x]$ lying over $M$ that contain polynomials 
without a zero mod $M$.

\section{Prime ideals containing $\R(E,M)$}

We are now in a position to characterize the prime ideals of
$\R$ containing $\R(E,D)$ as being precisely the ideals of the
form $\MU$ for ultrafilters $\U$ on $E$, under the following
hypothesis: every $f\in\R$ takes values in only finitely many
residue classes of $M$. 

This hypothesis may seem only marginally weaker than the assumption
that $D/M$ is finite. Note however, that it is sometimes satisfied 
for infinite $D/M$ under perfectly natural circumstances, for instance,
when $E$ intersects only finitely many residue classes of $M^n$ for
each $n\in\natn$ ($E$ precompact), and $\R$ consists of functions that 
are uniformly $M$-adically continuous.


As in the case of integer-valued polynomials, we can show that
every prime ideal of $\fring{E}{D}$ containing $\fring{E}{M}$ is
maximal under certain conditions, notably if $D/M$ is finite.
The proof for $\Int(D)$, when $D/M$ is finite 
\cite[Lemma~V.1.9.]{CahCha97IVP}, carries over practically without change.
Note that Definition~\ref{funcring-def} ensures that every ring of
functions $\R$ contains all constant functions -- an essential requirement
of the following proof.

\begin{lem}\label{primesmaximal}
Let $M$ be a maximal ideal of $D$ such that every function in 
$\R=\fring{E}{D}$ takes values in only finitely many residue 
classes mod $M$, and $\Q$ a prime ideal of $\fring{E}{D}$ 
containing $\fring{E}{M}$. Then $\Q$ is maximal and $\R/\Q$
is isomorphic to $D/M$.
\end{lem}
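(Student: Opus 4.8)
The plan is to prove both assertions at once by showing that the inclusion of constant functions induces a ring isomorphism $D/M \cong \R/\Q$; since $D/M$ is a field (as $M$ is maximal), this simultaneously gives that $\Q$ is maximal and identifies the residue ring.

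First I would set up the map. The constant functions embed $D$ into $\R$, and a constant function with value $m\in M$ takes all its values in $M$, so it lies in $\fring{E}{M}\subseteq\Q$. Hence the composite $D\to\R\to\R/\Q$ annihilates $M$ and factors through a ring homomorphism $\iota\colon D/M\to\R/\Q$. Because $\Q$ is prime it is proper, so $\R/\Q$ is a nonzero ring; thus the kernel of $D\to\R/\Q$ is a proper ideal of $D$ containing the maximal ideal $M$, hence equals $M$, and $\iota$ is injective.

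The heart of the argument is surjectivity of $\iota$, and this is exactly where the finiteness hypothesis and the primality of $\Q$ enter. Given $f\in\R$, by hypothesis $f$ takes values in only finitely many residue classes mod $M$; choose representatives $d_1,\ldots,d_k\in D$ of these classes. For each $e\in E$ we have $f(e)\equiv d_j$ mod $M$ for some $j$, so at least one factor of $\prod_{i=1}^k\bigl(f(e)-d_i\bigr)$ lies in $M$. Consequently the product $\prod_{i=1}^k(f-d_i)$ — which lies in $\R$, since $\R$ is a ring containing the constants $d_i$ — sends every $e\in E$ into $M$ and therefore belongs to $\fring{E}{M}\subseteq\Q$. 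As $\Q$ is prime, one factor $f-d_j$ already lies in $\Q$, so $f$ and the constant $d_j$ have the same image in $\R/\Q$. This shows every element of $\R/\Q$ is represented by a constant, i.e.\ $\iota$ is surjective.

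Combining the two steps, $\iota\colon D/M\to\R/\Q$ is an isomorphism; in particular $\R/\Q$ is a field, so $\Q$ is maximal, as claimed. I do not anticipate a serious obstacle here: the delicate point is merely verifying that $\prod_{i=1}^k(f-d_i)$ genuinely falls into $\fring{E}{M}$, which relies on the finiteness hypothesis to make this a \emph{finite} product and on the elementary observation that some factor vanishes mod $M$ at each point. The essential structural inputs are that $\R$ contains all constant functions and that $\Q$ is prime rather than merely an ideal.
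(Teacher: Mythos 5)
Your proof is correct and follows essentially the same route as the paper: the key step in both is that the finite product $\prod_i(f-d_i)$ over representatives of the residue classes meeting $f(E)$ lands in $\fring{E}{M}\subseteq\Q$, so primality of $\Q$ forces some $f-d_j\in\Q$, making every element of $\R/\Q$ congruent to a constant. The paper phrases this as showing that a system of representatives of $D$ mod $M$ also represents $\R$ mod $\Q$, whereas you package it as an induced isomorphism $\iota\colon D/M\to\R/\Q$ (with the injectivity argument made explicit, which the paper leaves implicit) — a purely presentational difference.
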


\begin{proof}
Let $\Q$ be a prime ideal of $\fring{E}{D}$ containing
$\fring{E}{M}$, and $A$ a system of representatives of $D$ mod $M$.
It suffices to show that $A$ (viewed as a set of constant functions)
is also a system of representatives of $\R$ mod $\Q$. 
Let $f\in \fring{E}{D}$ and $a_1,\ldots,a_r\in A$ the representatives 
of those residue classes of $M$ that intersect $f(E)$ non-trivially.
Then $\prod_{i=1}^r(f-a_i)$ is in $\fring{E}{M}\subseteq \Q$ and, 
$\Q$ being prime, one of the factors $(f-a_i)$ must be in $\Q$. 
This shows that $f$ is congruent mod $\Q$ to one of the constant functions 
$a_1,\ldots,a_r$, and, in particular, to an element of $A$.
Therefore, $A$ is a system of representatives of $\fring{E}{D}$ mod $\Q$.
\end{proof}

\begin{lem} \label{finitelem}
Let $\mathcal{R}=\fring{E}{D}$ a ring of functions and $M$ a 
maximal ideal of $D$ such that every $f\in \mathcal{R}$ takes values
in only finitely many residue classes of $M$. Let $\I$ be an ideal of $\R$.

Then $\I$ is contained in an ideal of the form $\MF$ 
for a filter $\F$ on $E$ 
if and only if $\fring{E}{M}\subseteq \I$.
\end{lem}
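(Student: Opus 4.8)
The plan is to pivot on Theorem~\ref{maxideal}(\ref{equivalence}). Since the present hypothesis -- that every $f\in\R$ takes values in only finitely many residue classes mod $M$ -- is exactly one of the two alternatives under which that theorem applies, I may regard the condition ``$\I\subseteq\MF$ for some filter $\F$ on $E$'' as synonymous with ``$\I$ contains no $M$-unit-valued function.'' The lemma will then follow once I relate this absence of $M$-unit-valued functions to the inclusion $\fring{E}{M}\subseteq\I$. Throughout I would read the statement for proper $\I$, since $\R$ itself is contained in no $\MF$ (indeed $\MF\neq\R$ by Remark~\ref{obviousinclusions}, as $M\neq D$).

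For the implication $\fring{E}{M}\subseteq\I\Rightarrow\I\subseteq\MF$, I would show that a proper $\I$ containing $\fring{E}{M}$ can harbour no $M$-unit-valued function. Suppose it did; because every function meets only finitely many residue classes mod $M$, Lemma~\ref{unit-valued-equivalence} upgrades this to an element $g\in\I$ with $g(e)\equiv 1$ mod $M$ for all $e\in E$. Then $1-g\in\fring{E}{M}\subseteq\I$, so $1=(1-g)+g\in\I$, contradicting properness. Hence $\I$ has no $M$-unit-valued function, and Theorem~\ref{maxideal}(\ref{equivalence}) yields $\I\subseteq\MF$. (Concretely, this says $\emptyset\notin\Z_M(\I)$; since Lemma~\ref{intersections-finitecase} makes $\Z_M(\I)$ closed under finite intersections, the finite intersection property holds, and Remarks~\ref{finite-intersection-property} and~\ref{ZMproperties} produce the filter.)

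The reverse implication $\I\subseteq\MF\Rightarrow\fring{E}{M}\subseteq\I$ is the crux, and I expect it to be the main obstacle. The tool I would reach for is the dichotomy of Proposition~\ref{finitecase}: by Theorem~\ref{maxideal}(\ref{equivalence}) the hypothesis $\I\subseteq\MF$ says that alternative~(\ref{f2}) fails, so one would like to conclude that alternative~(\ref{f1}), namely $\fring{E}{M}\subseteq\I$, holds instead. The difficulty is that the exhaustiveness of the dichotomy -- that at least one alternative must occur -- is proved in Proposition~\ref{dichotomy} using the maximality of the ideal, via an identity $1=hg+f$; for an arbitrary $\I$ there is no such leverage. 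I therefore anticipate that this direction genuinely leans on $\I$ being maximal -- or at least maximal among ideals avoiding $M$-unit-valued functions, which by Theorem~\ref{maxideal}(\ref{conditionalmax}) are precisely the $\MU$ and already satisfy $\fring{E}{M}\subseteq\MU$ by Remark~\ref{obviousinclusions}. Determining exactly which ideals $\I$ the reverse inclusion covers, and reconciling the general formulation with this maximality input, is the step I would scrutinise most carefully.
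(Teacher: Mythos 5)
Your forward direction is correct and is, in substance, the paper's own route: the paper proves Lemma~\ref{finitelem} in two lines, citing Proposition~\ref{finitecase} for the equivalence ``$\fring{E}{M}\subseteq\I$ iff $\I$ contains no $M$-unit-valued function,'' and then part~(\ref{equivalence}) of Theorem~\ref{maxideal} --- exactly the pivot you chose, with your explicit use of Lemma~\ref{unit-valued-equivalence} merely unwinding how Proposition~\ref{finitecase} is proved. Your properness caveat is also genuinely needed (for $\I=\R$ the right-hand side of the lemma holds while the left-hand side fails, since $\MF\neq\R$); the paper leaves it implicit.

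The direction you could not complete is not a failure of your attempt; it exposes a flaw in the paper's proof. Proposition~\ref{finitecase} is stated and proved only for \emph{maximal} ideals --- its exhaustiveness rests on Proposition~\ref{dichotomy}, whose proof writes $1=h(x)g(x)+f(x)$ using maximality of $\Q$ --- yet the paper's proof of Lemma~\ref{finitelem} applies it to an arbitrary ideal $\I$. In that generality the ``only if'' direction of the lemma is simply false: assume $D$ is not a field and take $\I=(0)$. Then $\I\subseteq\MF$ for every filter $\F$ (indeed $(0)\subseteq\fring{E}{M}=M_{\{E\}}$), but $\fring{E}{M}\not\subseteq(0)$, as it contains the nonzero constant functions with values in $M$; even primeness of $\I$ does not help, since for $\R=\Int(\intz)$ the zero ideal is prime. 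Your diagnosis of what rescues the statement is also exactly right: if $\I$ is maximal, Proposition~\ref{finitecase} applies and closes the loop, and if $\I$ is merely maximal among ideals avoiding $M$-unit-valued functions, then $\I=\MU$ by Theorem~\ref{maxideal}(\ref{conditionalmax}) and $\fring{E}{M}\subseteq\MU$ by Remark~\ref{obviousinclusions}. Note finally that the error is harmless downstream: Theorem~\ref{finitethm} invokes only the ``if'' direction of the lemma, applied to a prime $\Q$ containing $\fring{E}{M}$ (which is maximal by Lemma~\ref{primesmaximal}), and that is precisely the direction you proved.
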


\begin{proof}
$\fring{E}{M}\subseteq \I$ is equivalent to $\I$ not containing
an $M$-unit-valued function, by Proposition~\ref{finitecase}.
The statement therefore follows from 
part (1) of Theorem~\ref{maxideal}.
\end{proof}

\begin{thm} \label{finitethm}
Let $\mathcal{R}=\fring{E}{D}$ a ring of functions,
and $M$ a maximal ideal of $D$. 
If every $f\in \mathcal{R}$ takes values in only finitely many
residue classes of $M$ \textrm{(}and, in particular, if $D/M$ 
is finite\textrm{)}, then
the prime ideals of $\R$ containing $\fring{E}{M}$ are exactly
the ideals of the form $\MU$ with $\U$ an ultrafilter on $E$.
Each of them is maximal and its residue field isomorphic to $D/M$.
\end{thm}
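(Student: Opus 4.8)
The plan is to assemble the statement from results already in hand; the content lies in matching hypotheses, not in any new computation. Two set inclusions and one uniform consequence must be checked.

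For the inclusion ``$\supseteq$'', I would observe that each candidate $\MU$ is of the asserted kind: since $M$ is maximal, hence prime, Lemma~\ref{ufprimes-prime} (applied with $P=M$) shows that $\MU$ is a prime ideal of $\R$, while Remark~\ref{obviousinclusions}(2) gives $\fring{E}{M}\subseteq\MU$. Thus every $\MU$ is a prime ideal of $\R$ containing $\fring{E}{M}$.

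For ``$\subseteq$'', I would start from an arbitrary prime ideal $\Q$ with $\fring{E}{M}\subseteq\Q$ and produce an ultrafilter $\U$ with $\Q=\MU$. Because the finiteness hypothesis holds, Lemma~\ref{finitelem} applies and yields a filter $\F$ on $E$ with $\Q\subseteq\MF$. I would then refine $\F$ to an ultrafilter $\U\supseteq\F$; by the monotonicity recorded in Remark~\ref{obviousinclusions}(4) this gives $\MF\subseteq\MU$, so $\Q\subseteq\MU$. To upgrade this containment to equality I invoke Lemma~\ref{primesmaximal}, which (again using the finiteness hypothesis) tells me that $\Q$ is in fact maximal; since $\MU$ is prime and hence a proper ideal, maximality of $\Q$ forces $\Q=\MU$.

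The remaining clauses then follow uniformly: every ideal in the common set just identified is a prime ideal containing $\fring{E}{M}$, so Lemma~\ref{primesmaximal} simultaneously delivers its maximality and the isomorphism $\R/\Q\cong D/M$. I do not anticipate a genuine obstacle here, as every ingredient is already proved; the only points needing care are that maximality of $\Q$ must be secured (from Lemma~\ref{primesmaximal}) \emph{before} the containment $\Q\subseteq\MU$ can be collapsed to equality, and that the passage from the filter $\F$ to an ultrafilter preserves the chain $\Q\subseteq\MF\subseteq\MU$ via Remark~\ref{obviousinclusions}(4).
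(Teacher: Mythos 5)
Your proposal is correct and follows essentially the same route as the paper's own proof: both directions rest on the same four ingredients (Lemma~\ref{ufprimes-prime} and Remark~\ref{obviousinclusions} for one inclusion; Lemma~\ref{finitelem}, refinement to an ultrafilter, Remark~\ref{obviousinclusions}, and Lemma~\ref{primesmaximal} for the other), and the maximality plus residue-field claims come from Lemma~\ref{primesmaximal} exactly as in the paper. The only difference is cosmetic ordering --- the paper invokes Lemma~\ref{primesmaximal} at the outset while you defer it until after establishing $\Q\subseteq\MU$ --- which changes nothing in substance.
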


\begin{proof}
Let $\Q$ be a prime ideal of $\R$ containing $\fring{E}{M}$.
By Lemma~\ref{primesmaximal}, $\Q$ is maximal and $\R/\Q$ is 
isomorphic to $D/M$. By Lemma~\ref{finitelem}, $\Q\subseteq \MF$
for some filter $\F$ on $E$. $\F$ can be refined to an ultrafilter
$\U$ on $E$, and then $\Q\subseteq \MF\subseteq \MU\ne\R$,
by Remark~\ref{obviousinclusions}. 
Since $\Q$ is maximal, $\Q=\MU$ follows.

Conversely, every ideal of the form $\MU$ for an ultrafilter
$\U$ on $E$ is prime, by Lemma~\ref{ufprimes-prime}, and 
contains $\fring{E}{M}$, by Remark~\ref{obviousinclusions}.
\end{proof}


Note, in particular, that Theorems~\ref{maxideal} and
\ref{finitethm} apply to $\R=\Int(E,D)$.
In this way, we see, when $M$ is a maximal ideal of finite index in $D$,
that prime ideals of $\Int(E,D)$ containing $\Int(D,M)$ are
inverse images of prime ideals of $D^E$, and ultimately come 
from ultrapowers of $(D/M)$, as in the discussion after 
Lemma~\ref{ufprimes-prime}.

\section{Divisible rings of functions}

Let $\R\subseteq D^E$ be a ring of functions and $M$ a maximal ideal
of $D$. We have seen that we can describe those maximal ideals of $\R$
lying over $M$ that contain $\fring{E}{M}$. 
We would like to know under what conditions this holds for 
every maximal ideal of $\R$ lying over $M$.

In the case where $M$ is a maximal ideal of finite index in a 
one-dimensional Noetherian domain $D$, Chabert showed that every 
maximal ideal of $\Int(D)$ lying over $M$ contains $\Int(D,M)$,
cf.~\cite[Prop.~V.1.11]{CahCha97IVP} and \cite[Lemma~3.3]{Fri13IVA}.
Once we know this, Theorem~\ref{finitethm} is applicable. It can 
be used to give an alternative proof of the fact that every prime
ideal of $\Int(D)$ lying over $M$ is maximal and of the form 
$M_\alpha=\{f\in \Int(D)\mid f(\alpha)\in \hat M\}$ for an element 
$\alpha$ in the $M$-adic completion of $D$.

We will now generalize Chabert's argument from integer-valued polynomials
to a class of rings of functions which we call divisible. Note that we
do not have to restrict ourselves to Noetherian domains; we only require
the individual maximal ideal for which we study the primes of $\R$
lying over it to be finitely generated. It is true that our questions 
only localize well when the domain is Noetherian, but we will pursue
a different course, not relying on localization.

\begin{Def}\label{divisibledef}
Let $R$ be a commutative ring and $E\subseteq R$.
We call a ring of functions $\R\subseteq R^E$
\textbf{divisible} if it has the following property:
If $f\in\R$ is such that $f(E)\subseteq cR$ for some non-zero $c\in R$,
then every function $g\in R^E$ satisfying $cg(x)=f(x)$ is 
also in $\R$.

We call $\R$ \textbf{weakly divisible} if for every 
$f\in\R$ and every non-zero $c\in R$ such that $f(E)\subseteq cR$,
there exists a function $g\in\R$ with $cg(x)=f(x)$. 
\end{Def}

If $R$ is a domain, we note that $g(x)$ in the above definition 
is unique and that, therefore, for domains,
weakly divisible is equivalent to divisible.

\begin{exa}
\begin{enumerate}
\par
\item
$\Int(E,D)$ is divisible. - This is our motivation.
\item
If $D$ is a valuation domain with maximal ideal $M$ then the ring of 
uniformly $M$-adically continuous functions from $E$ to $D$ ($E\subseteq D$
equipped with subspace topology of $M$-adic topology) is a divisible ring 
of functions.
\end{enumerate}
\end{exa}

We now consider minimal prime ideals of  non-zero principal ideals,
that is, $P$ containing some $p\ne 0$ such that there is no prime 
ideal strictly contained in $P$ and containing $p$.
If $D$ is Noetherian, this condition reduces to ``$\height(P)=1$''. In
non-Noetherian domains, we find examples with $\height(P)>1$, for
instance, the maximal ideal of a finite-dimensional valuation domain.

\begin{lem}\label{pseudoprincipal}
Let $R$ be a domain, $P$ a finitely generated prime ideal that is
a minimal prime of a non-zero principal ideal $(p)\subseteq P$. 
Then there exist $m\in \natn$ and $s\in R\takeaway P$ such that 
$sP^m\subseteq pR$.
\end{lem}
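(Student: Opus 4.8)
The plan is to reduce the statement to the local ring $R_P$, exploit the finite generation of $P$ to get a single uniform exponent, and then clear denominators using that $R$ is a domain. First I would localize at $P$. Since $P$ is a minimal prime of the principal ideal $(p)$, the only prime of $R$ that is contained in $P$ and contains $p$ is $P$ itself; under the correspondence between primes of $R_P$ and primes of $R$ contained in $P$, this says that $PR_P$ is the unique prime of $R_P$ containing $pR_P$. As the radical of an ideal equals the intersection of the primes containing it, we get $\sqrt{pR_P}=PR_P$; in particular every element of $PR_P$ has some power lying in $pR_P$.

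The key leverage is the finite generation of $P$, which is what lets me upgrade this pointwise statement to a uniform one. Write $P=(a_1,\dots,a_n)$, so $PR_P$ is generated by $a_1/1,\dots,a_n/1$, and choose $k_i\in\natn$ with $(a_i/1)^{k_i}\in pR_P$. Setting $m=\sum_{i=1}^n(k_i-1)+1$ and expanding $(PR_P)^m$ as the ideal generated by the degree-$m$ monomials in the $a_i/1$, a pigeonhole argument shows that in any such monomial some factor $a_i/1$ occurs to a power at least $k_i$, so the monomial lies in $pR_P$. Hence $(PR_P)^m\subseteq pR_P$, and since localization commutes with forming finite powers of ideals this reads $P^mR_P\subseteq pR_P$.

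Finally I would descend back to $R$. Because $P$, and therefore $P^m$, is finitely generated, write $P^m=(c_1,\dots,c_t)$. From $c_j/1\in pR_P$ one obtains, after clearing denominators, $u_j,s_j\in R\setminus P$ and $r_j\in R$ with $u_j(s_jc_j-pr_j)=0$ in $R$; here the domain hypothesis is decisive, since $u_j\neq0$ forces the honest equation $s_jc_j=pr_j$, so $s_jc_j\in pR$. Taking $s=s_1\cdots s_t$, which lies in $R\setminus P$ as a product of elements outside the prime $P$, gives $sc_j\in pR$ for every $j$, whence $sP^m\subseteq pR$, as required.

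The main obstacle, and the reason one cannot simply invoke Noetherian primary-decomposition machinery, is producing a single exponent $m$ that works for all of $P$ at once: this is exactly where finite generation of $P$ is indispensable, converting ``$PR_P=\sqrt{pR_P}$'' into the uniform containment $(PR_P)^m\subseteq pR_P$. The remaining steps are routine bookkeeping with the localization and the domain property.
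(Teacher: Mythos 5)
Your proposal is correct and follows essentially the same route as the paper: localize at $P$, observe that $PR_P=\sqrt{pR_P}$ by minimality, use finite generation to obtain a uniform exponent $m$ with $(PR_P)^m\subseteq pR_P$, and then clear denominators over the finitely many generators of $P^m$ to produce the single element $s\in R\setminus P$. The only difference is one of detail: the paper invokes the standard fact that a finitely generated ideal contained in a radical has a power inside the ideal, whereas you prove it explicitly via the pigeonhole bound $m=\sum_i(k_i-1)+1$.
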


\begin{proof}
In the localization $R_P$, $P_P$ is the radical of $pR_P$.
Therefore, since $P$ (and hence $P_P$) is finitely generated, 
there exists $m\in\natn$ with ${P_P}^m\subseteq pR_P$ and in particular
$P^m\subseteq pR_P$.
The ideal $P^m$ is also finitely generated, by $p_1,\ldots,p_k$, say.
Let $a_i\in R_P$ with $p_i=pa_i$. By considering the fractions
$a_i=r_i/s_i$ (with $r_i\in R$ and $s_i\in R\takeaway P$), and 
setting $s=s_1\cdot\ldots\cdot s_k$, we see that $sP^m\subseteq pR$ 
as desired.
\end{proof}

\begin{thm}\label{REPcontainedinQ}
Let $D$ be a domain and $P$ a finitely generated prime ideal
that is a minimal prime of a non-zero principal ideal.
Let $\R\subseteq D^E$ be a divisible ring of functions from $E$ to $D$.
Then every prime ideal $\Q$ of $\R$ with $\Q\cap D=P$ contains 
$\fring{E}{P}$.
\end{thm}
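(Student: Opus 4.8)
The plan is to generalize Chabert's divisibility argument, with Lemma~\ref{pseudoprincipal} supplying the essential algebraic input. Let $(p)\subseteq P$ be the non-zero principal ideal of which $P$ is a minimal prime. Applying Lemma~\ref{pseudoprincipal} to $D$ yields $m\in\natn$ and $s\in D\takeaway P$ with $sP^m\subseteq pD$. Fix an arbitrary $f\in\fring{E}{P}$; the goal is to show $f\in\Q$, since then $\fring{E}{P}\subseteq\Q$ follows.

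First I would observe that $f(e)\in P$ for every $e\in E$ forces $f(e)^m\in P^m$, whence $s\,f(e)^m\in sP^m\subseteq pD$. Because $s$ is a constant function lying in $\R$ and $f^m\in\R$, the product $s f^m$ belongs to $\R$ and takes all its values in the principal ideal $pD$ with $p\ne 0$. This is exactly the hypothesis of Definition~\ref{divisibledef}: divisibility of $\R$ then provides a $g\in\R$ with $p\,g(x)=s\,f(x)^m$ for all $x$, i.e. $pg=sf^m$ in $\R$.

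Next I would push this relation into $\Q$. Since $p\in(p)\subseteq P=\Q\cap D\subseteq\Q$, the element $sf^m=pg$ lies in $\Q$. Now primality of $\Q$ is invoked: as $s\in D\takeaway P$ and $\Q\cap D=P$, the constant $s$ is not in $\Q$, so $sf^m\in\Q$ gives $f^m\in\Q$, and a second application of primality gives $f\in\Q$.

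The genuine content is concentrated in Lemma~\ref{pseudoprincipal}, which produces the single element $s$ outside $P$ clearing the obstruction $P^m$ into $(p)$; once that inclusion is in hand the remaining steps are formal. The point requiring care is the appeal to divisibility: I must verify that $sf^m$ really lands in $pD$ rather than merely in $P$ (guaranteed precisely by $sP^m\subseteq pD$) and that $p\ne 0$ (which holds since $(p)$ is a non-zero principal ideal). The closure of $\R$ under the products $f^m$ and $s f^m$ is automatic from the standing requirement in Definition~\ref{funcring-def} that a ring of functions contain all constant functions.
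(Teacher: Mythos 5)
Your proof is correct and follows exactly the paper's own argument: Lemma~\ref{pseudoprincipal} gives $s P^m \subseteq pD$, divisibility of $\R$ converts $sf^m \in \fring{E}{pD}$ into $sf^m = pg \in \Q$, and primality of $\Q$ together with $s \notin \Q$ yields $f \in \Q$. The only difference is cosmetic: you spell out the two applications of primality ($f^m \in \Q$, then $f \in \Q$) that the paper compresses into one sentence.
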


\begin{proof}
Let $f\in \fring{E}{P}$. Let $p\in P$ non-zero and such that 
there is no prime ideal $P_1$ with $(p)\subseteq P_1\subsetneq P$.
By Lemma~\ref{pseudoprincipal},
there are $m\in\natn$ and $s\in D\takeaway P$ such
that $sP^m\subseteq pD$.
Then $sf^m\in \fring{E}{pD}$. Since $\R$ is divisible,
$sf^m=pg$ for some $g\in \fring{E}{D}$. 
Therefore, $sf^m\in p\,\fring{E}{D}\subseteq \Q$.
As $\Q$ is prime and $s\notin \Q$, we conclude that $f\in \Q$.
\end{proof}

\begin{cor}\label{finalcor}
Let $D$ be a domain, $M$ a finitely generated maximal
ideal of height $1$, and $E$ a subset of $D$. Let $\R\subseteq D^E$ be
a divisible ring of functions from $E$ to $D$, such that each $f\in\R$
takes its values in only finitely many residue classes of $M$ in $D$.

Then the prime ideals of $\R$ lying over $M$ are precisely the ideals
of the form $M_\U$ for an ultrafilter $\U$ on $E$. Each $M_\U$ is
a maximal ideal and its residue field isomorphic to $D/M$.
\end{cor}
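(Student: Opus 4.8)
The plan is to combine Theorem~\ref{REPcontainedinQ} with Theorem~\ref{finitethm}: the hypotheses of the corollary are tailored precisely so that both become applicable with $P=M$. The only genuine verification needed is that a finitely generated maximal ideal of height~$1$ is a minimal prime of some non-zero principal ideal; everything else is plugging into the two theorems.

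First I would check this height condition. Since $M$ has height~$1$, there is no prime ideal strictly between $(0)$ and $M$. Pick any non-zero $p\in M$. Then any prime $P_1$ with $(p)\subseteq P_1\subseteq M$ is non-zero (because $p\ne 0$), so $(0)\subsetneq P_1\subseteq M$, and the absence of an intermediate prime forces $P_1=M$. Thus $M$ is a minimal prime of the principal ideal $(p)$, and since $M$ is assumed finitely generated, the hypotheses of Theorem~\ref{REPcontainedinQ} are met with $P=M$. Applying that theorem shows that every prime ideal $\Q$ of $\R$ lying over $M$, i.e.\ with $\Q\cap D=M$, contains $\fring{E}{M}$.

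Next, the assumption that each $f\in\R$ takes values in only finitely many residue classes mod $M$ lets me invoke Theorem~\ref{finitethm}: the prime ideals of $\R$ containing $\fring{E}{M}$ are exactly the ideals $\MU$ for ultrafilters $\U$ on $E$, each maximal with residue field isomorphic to $D/M$. Combining the two, every prime $\Q$ lying over $M$ contains $\fring{E}{M}$ and hence equals some $\MU$.

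For the converse direction I would verify that each $\MU$ genuinely lies over $M$. It is prime by Lemma~\ref{ufprimes-prime} and contains $\fring{E}{M}$ by Remark~\ref{obviousinclusions}. To see $\MU\cap D=M$, evaluate on a constant function $d\in D$: one has $d^{-1}(M)=E$ if $d\in M$ and $d^{-1}(M)=\emptyset$ otherwise, so, using $E\in\U$ and $\emptyset\notin\U$, the constant $d$ lies in $\MU$ precisely when $d\in M$. The maximality and the identification of the residue field with $D/M$ are then immediate from Theorem~\ref{finitethm}. The only step requiring any thought is the height-$1$ observation in the second paragraph; I expect the rest to be a direct assembly of the two cited theorems.
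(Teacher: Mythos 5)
Your proof is correct and takes essentially the same route as the paper, whose entire proof is the one-liner ``This follows from Theorem~\ref{REPcontainedinQ} via Theorem~\ref{finitethm}.'' The details you supply---that a finitely generated maximal ideal of height $1$ is a minimal prime of a non-zero principal ideal (so Theorem~\ref{REPcontainedinQ} applies with $P=M$), and that each $\MU$ indeed lies over $M$ via evaluation on constant functions---are precisely the verifications the paper leaves implicit, and both are carried out correctly.
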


\begin{proof}
This follows from Theorem~\ref{REPcontainedinQ} via
Theorem~\ref{finitethm}.
\end{proof}

To summarize, we can, using ultrafilters, describe certain prime 
ideals of a ring of functions $\R=\fring{E}{D}$ lying over a maximal 
ideal $M$ pretty well: namely, those prime ideals that do not contain 
$M$-unit-valued functions (Theorem~\ref{maxideal}), or that contain
$\fring{E}{M}$ (Theorem~\ref{finitethm}).

We have, so far, little information about when all prime ideals of
$\R$ lying over $M$ are of this form, apart from the sufficient
condition in Theorem~\ref{REPcontainedinQ}. 

If we restrict our attention to rings of functions $\R$ with 
$D[x]\subseteq\fring{E}{D}\subseteq D^E$,
it would be interesting to find a precise criterion, perhaps involving
topological density, for this property.

Note that in the ``nicest'' case, that of $\Int(D)$, 
where $D$ is a Dedekind ring with finite residue fields, not only is
$\Int(D,M)$ contained in every prime ideal of $\Int(D)$ lying over
a maximal ideal $M$ of $D$, but also $\Int(D)$ is dense in $D^D$
with product topology of discrete topology on $D$ 
\cite{Fri99intp,CahChaFri00intp}. 


\bibliography{ufprimes}

\begin{thebibliography}{1}

\bibitem{CahCha97IVP}
{\sc P.-J. Cahen and J.-L. Chabert}, {\em Integer-valued polynomials}, vol.~48
  of Mathematical Surveys and Monographs, American Mathematical Society,
  Providence, RI, 1997.

\bibitem{CahChaFri00intp}
{\sc P.-J. Cahen, J.-L. Chabert, and S.~Frisch}, {\em Interpolation domains},
  J. Algebra 225 (2000), 794--803.

\bibitem{Fri99intp}
{\sc S.~Frisch}, {\em Interpolation by integer-valued polynomials}, J. Algebra
  211 (1999), 562--577.

\bibitem{Fri13IVA}
{\sc S.~Frisch}, {\em Integer-valued polynomials on algebras}, J. Algebra 373
  (2013), 414--425, Corrigendum: 412 (2014) p282.

\bibitem{GilHei94Imb}
{\sc R.~Gilmer and W.~Heinzer}, {\em Imbeddability of a commutative ring in a
  finite-dimensional ring}, Manuscripta Math. 84 (1994), 401--414.

\bibitem{GoJu95IncPh}
{\sc M.~Goldstern and H.~Judah}, {\em The incompleteness phenomenon}, A K
  Peters, Ltd., Wellesley, MA, 1995.
\newblock A new course in mathematical logic, With a foreword by Saharon
  Shelah.

\bibitem{Hin05mathlog}
{\sc P.~G. Hinman}, {\em Fundamentals of mathematical logic}, A K Peters, Ltd.,
  Wellesley, MA, 2005.

\bibitem{LeLoSh91PSIPZ}
{\sc R.~Levy, P.~Loustaunau, and J.~Shapiro}, {\em The prime spectrum of an
  infinite product of copies of {${\bf Z}$}}, Fund. Math. 138 (1991), 155--164.

\end{thebibliography}
\bibliographystyle{siamese}

\end{document}